\documentclass[12pt]{article}
\usepackage[utf8]{inputenc}
\usepackage{enumitem}
\usepackage{amsmath}
\usepackage{amssymb}
\usepackage{booktabs}
\usepackage{setspace}
\usepackage{graphicx}
\usepackage{amsthm}
\usepackage{float}
\usepackage{subcaption}
\usepackage{hyperref}
\hypersetup{breaklinks=true}
\usepackage{geometry}
\usepackage[affil-it]{authblk}
\geometry{
top=30mm,
bottom=30mm,
left=28mm,
right=28mm,
}
\usepackage{authblk}

\usepackage[authoryear,round]{natbib}
\makeatletter 
\renewcommand\NAT@biblabel[1]{[#1]}

\newtheorem{thm}{Theorem}

\newtheorem{as}{Assumption}

\newcommand{\fn}{\frac{1}{n}}
\newcommand{\betahat}{\hat{\beta}}
\newcommand{\sigmahat}{\hat{\sigma}}

\newcommand{\argmin}{\mathop{\rm argmin}\limits}

\newcommand*{\transpose}{%
  {\mathpalette\@transpose{}}%
  }
  
  \expandafter\let\expandafter\oldproof\csname\string\proof\endcsname
\let\oldendproof\endproof
\renewenvironment{proof}[1][\proofname]{%
  \oldproof[\bfseries \scshape #1]%
}{\oldendproof}

\title{Testing Heteroskedasticity in High-Dimensional Linear Regression}
\author{Akira Shinkyu\thanks{Email: akira.shinkyu.5589@gmail.com}}
\affil{Kobe University\thanks{Graduate School of Economics, 2-1 Rokkodai-cho, Nada-ku, Kobe, 657-8501, JAPAN.}}
\providecommand{\keywords}[1]{\textbf{Keywords}: #1}
\date{\today}
\onehalfspacing
\allowdisplaybreaks
\begin{document}
\maketitle
\begin{abstract}
We propose a new testing procedure of heteroskedasticity in high-dimensional linear regression, where the number of covariates can be larger than the sample size. Our testing procedure is based on residuals of the Lasso. We demonstrate that our test statistic has asymptotic normality under the null hypothesis of homoskedasticity. Simulation results show that the proposed testing procedure obtains accurate empirical sizes and powers. We also present results of real economic data applications. 
\end{abstract}
\keywords{Lasso, heteroskedasticity, high-dimensional data, linear regression, hypothesis testing.}
\section{Introduction}
Many testing procedures of heteroskedasticity for linear regression models have been proposed in econometrics and statistics since the 1960s. For example, in econometrics, see \cite{h1976}, \cite{s1978}, \cite{g1978}, \cite{bp1979}, \cite{w1980}, \cite{k1981}, \cite{kb1982}, and \cite{np1987}. On the other hand, in statistics, see \cite{gq1965}, \cite{g1969}, \cite{r1969}, \cite{b1978}, \cite{hm1979}, \cite{cr1981}, \cite{cw1983}, and \cite{db1997}. \par
Recently, with the advancement of computer technology, high-dimensional data have been extensively collected. As a result, high-dimensional linear regression models, where the number of covariates $p$ is large compared with the sample size $n$, have become more prevalent, attracting significant attention in recent econometrics and statistics studies. Because the testing methods of heteroskedasticity listed above were proposed in the asymptotic framework, where $p$ is much smaller than $n$, the validity of inference results by these methods cannot be guaranteed when $p$ is also large compared with $n$. \par 

To conduct valid heteroskedasticity tests in linear regression when $p$ is large compared with $n$, \cite{ly2019} proposed heteroskedasticity testing procedures called the approximate likelihood ratio test (ALRT) and the coefficient of variation test (CVT). By employing the asymptotic setting where $p<n$ and $n-p\rightarrow \infty$, they demonstrated that these test statistics have asymptotic normality under the null hypothesis of homoskedasticity. Their simulation studies also showed that these two methods have better empirical sizes and powers than classical testing methods such as the White test and the Breusch-Pagan test. \par

However, ALRT and CVT also depend on OLS residuals. Therefore, these methods are not available when $p>n$ because the design matrix is not full column rank. Furthermore, according to the simulation studies of \cite{ly2019}, even if $p<n$ but $p$ is close to $n$, the empirical powers of these methods are too low because OLS is unstable due to the near singularity of the Gram matrix of covariates. As far as we know, there are no existing methods of testing heteroskedasticity when $p>n$ and performs stably when $p<n$ but $p$ is close to $n$.\par 

In this study, we propose a testing procedure of heteroskedasticity for high-dimensional linear regression models. We replace OLS residuals of CVT with residuals of the Lasso (\citealt{t1996}). Therefore, we call the proposed testing method a Lasso-based CVT (LCVT). We show that the test statistic has asymptotic normality under the null hypothesis of homoskedasticity even when $p>n$. Simulation results show that LCVT obtains accurate empirical sizes and powers when $p>n$ and performs more stably than ALRT and CVT when $p<n$ and $p$ is close to $n$ due to the $\ell_1$ regularization. We also report the results of real economic data applications by LCVT. The implementation of our method is very simple because of the standard R package by \cite{fht2010}. \par 

The remainder of our paper is organized as follows. Section 2 introduces a high-dimensional linear regression model and proposes a test statistic for the null hypothesis of homoskedasticity. Section 3 presents assumptions and asymptotic properties of the test statistic. Section 4 presents the results of our simulation studies. Section 5 reports the results of real data applications. Section 6 concludes the study, and the appendix provides a proof of the main theorem.\par 

\section{Model and Test Statistic}
First, we introduce some notations. Let $a^T$ be the transpose of a vector $a$. Let $\|a\|$, $\|a\|_1$, and $\|a\|_\infty$ be the $\ell_2$ norm, $\ell_1$ norm, and maximum element in the absolute value of a vector $a$, respectively. For any sequence of positive real numbers $\{a_n\}$ and $\{b_n\}$, $a_n\asymp b_n$ denotes $C_1 \leq a_n/b_n\leq C_2$ for all $n$ and some positive constants $C_1$ and $C_2$. Let $\lambda_{\min}(A)$ be the minimum eigenvalue of the matrix $A$. Let $C$ be a generic positive constant that can vary from line to line. \par 
We consider the linear regression model
\begin{equation}
    Y_i=X_i^T\beta_0+\epsilon_i, \label{79}
\end{equation}
where $Y_i\in \mathbb{R}$ denotes the response, $X_i=(X_{i,1},\ldots,X_{i,p})^T\in \mathbb{R}^p$ denotes the random covariate, $\beta_0 \in \mathbb{R}^p$ denotes the unknown regression coefficient, and $\epsilon_i \in \mathbb{R}$ denotes the unobservable error with mean zero and conditional variance $\sigma(X_i)^2$ for $i=1,\ldots,n$ and some positive function $\sigma(\cdot)$. Let $s_0$ be the number of nonzero components in $\beta_0$. We also denote $Y=(Y_1,\ldots,Y_n)^T\in \mathbb{R}^n$, $X=(X_1,\ldots,X_n)^T\in \mathbb{R}^{n \times p}$, and $\epsilon=(\epsilon_1,\ldots,\epsilon_n)^T \in \mathbb{R}^n$. Then, the matrix form of \eqref{79} is expressed as $Y=X\beta_0+\epsilon$. Throughout the paper, we assume that $p>\kappa n$ for some $0<\kappa<1$ and $\log p/n =o(1)$.\par

The purpose of our study is to investigate whether $\sigma(\cdot)$ is a positive constant or a positive function of $X_i$. To this aim, we test the null hypothesis of homoskedasticity:
\begin{equation}
    H_0: \sigma(\cdot)=\sigma_0, \label{72} 
\end{equation}
where $\sigma_0$ denotes some positive constant.\par 
To derive the test statistic for the null hypothesis of homoskedasticity in the high-dimensional linear regression model, we first estimate the coefficient $\beta_0$ by the Lasso:
\begin{equation*}
    \betahat:=\argmin_{\beta \in \mathbb{R}^p}\frac{1}{n}\|Y-X\beta\|^2+2\lambda_0\|\beta\|_1,
\end{equation*}
where $\lambda_0>0$ represents a tuning parameter. The Lasso is one of the most popular and successful methods for estimating the coefficients of high-dimensional linear regression models. An excellent overview of the Lasso is given by \cite{bv2011}. This estimator has variable selection properties, and useful convergence rates are available due to oracle inequalities (\citealt{brt2009}). Furthermore, it can be unique even when $p>n$ under a general condition (\citealt{t2013}). \par 
We construct a test statistic for the null hypothesis of homoskedasticity based on residuals of the Lasso. Define the residuals and the sample mean of squared residuals as 
\begin{align*}
    \hat{\epsilon}_i&=Y_i-X_i^T\betahat, \\ \sigmahat^2&=\frac{1}{n}\|Y-X\betahat\|^2.
\end{align*}
We propose the following coefficient of variation statistic as a test statistic for the null hypothesis of homoskedasticity:
\begin{equation}
    T=\frac{1}{n\sigmahat^4} \sum_{i=1}^n (\hat{\epsilon}_i^2-\sigmahat^2)^2. \label{99}
\end{equation}
The idea of the coefficient of variation comes from \cite{j1971}, who proposed the test statistics for the null hypothesis of spherical covariance matrices. 
\section{Theoretical Results}
We present assumptions to investigate the properties of the proposed test statistic for the null hypothesis of homoskedasticity. Roughly speaking, our test statistic has asymptotic normality for high-dimensional linear regression models under the null hypothesis if covariates and errors are Gaussian.\par 
First, we make the following assumption on errors. 
\begin{as}
\textup{Errors $\epsilon_i$, $i=1,\ldots,n$, can be expressed as $\epsilon_i=\sigma(X_i) \xi_i$, where $\xi_i$ independently and identically follows a standard normal distribution. Furthermore, $\xi=(\xi_1,\ldots,\xi_n)^T$ is independent of $X$. 
}
\end{as}
This assumption implies that errors are conditional Gaussian: $\epsilon_i | X_i \sim N(0, \sigma(X_i)^2)$ for $i=1,\ldots,n$. Furthermore, this assumption also implies that under the null hypothesis of homoskedasticity $\epsilon$ follows a multivariate normal distribution with mean zero and covariance matrix $\sigma^2_0 I_n$, and it is independent of $X$. This assumption is important for deriving the order of the Lasso and the asymptotic joint distribution of the fourth and second sample moments of errors under the null hypothesis of homoskedasticity.  \par 
Next, we make the following assumption on covariates.
\begin{as}
\textup{Covariates $X_i$, $i=1,\ldots,n$, independently and identically follow a multivariate normal distribution with mean zero and covariance matrix $\Sigma$. Furthermore, there exist some positive constants $C_{\min}$ and $C_{\max}<\infty$ such that $C_{\min} \leq \lambda_{\min}(\Sigma)$ and $\max_{j} \Sigma_{j,j}<C_{\max}.$ 
}
\end{as}
There is a similar assumption in (A2) of \cite{vbrd2014}. Under this assumption and further assuming $s_0=o(n/\log p)$, the so-called compatibility condition (\citealt{bv2011}), which is important to derive the order of the Lasso, holds with probability tending to one. See Lemma 5.2 of \cite{vbrd2014} and Theorem 1 of \cite{rwy2010}.\par 
Under these assumptions, we can demonstrate that asymptotic normality holds for the proposed test statistic under the null hypothesis of homoskedasticity.
\begin{thm}
\label{thm1}
Suppose that Assumptions 1 and 2 hold and $s_0=o(\sqrt{n}/(\log p)^2)$. Let $\lambda_0 \asymp \sqrt{\log p/n}$ be a suitably chosen tuning parameter. Then, under $H_0$, we obtain
\begin{equation}
    \sqrt{n}(T-2)\xrightarrow{d}N(0,24). \label{118}
\end{equation}
\end{thm}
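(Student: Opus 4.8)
The plan is to compare the feasible statistic $T$, built from Lasso residuals, with an infeasible oracle version $T^*$ built from the true errors, and to show that (i) $T^*$ satisfies the stated CLT and (ii) $\sqrt{n}(T-T^*)=o_p(1)$, after which Slutsky's theorem finishes the argument. A useful preliminary observation is that $T$ is scale invariant: expanding the square in \eqref{99} gives $T=\hat m_4/\hat m_2^2-1$, where $\hat m_2=\fn\sum_i\hat{\epsilon}_i^2=\sigmahat^2$ and $\hat m_4=\fn\sum_i\hat{\epsilon}_i^4$, while the oracle quantities $m_2=\fn\sum_i\epsilon_i^2$ and $m_4=\fn\sum_i\epsilon_i^4$ satisfy $m_4/m_2^2=(\fn\sum_i\xi_i^4)/(\fn\sum_i\xi_i^2)^2$ because $\epsilon_i=\sigma_0\xi_i$ under Assumption 1 and $H_0$. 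Hence I may work directly with the standardized moments $a_n=\fn\sum_i\xi_i^2$ and $b_n=\fn\sum_i\xi_i^4$ and set $T^*=b_n/a_n^2-1$.

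For step (i), since the $\xi_i$ are i.i.d.\ $N(0,1)$ with all moments finite, the bivariate Lindeberg--L\'evy CLT applies to the i.i.d.\ vectors $(\xi_i^2,\xi_i^4)$. Using $E\xi^2=1$, $E\xi^4=3$, $E\xi^6=15$, $E\xi^8=105$, one obtains
\[
\sqrt n\left((a_n,b_n)-(1,3)\right)\xrightarrow{d}N(0,V),\qquad V=\begin{pmatrix}2&12\\12&96\end{pmatrix}.
\]
Applying the delta method to $g(a,b)=b/a^2-1$, whose gradient at $(1,3)$ is $\nabla g=(-6,1)$, the limiting variance is $\nabla g^{T}V\nabla g=24$, so $\sqrt n(T^*-2)\xrightarrow{d}N(0,24)$. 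This already identifies the centering constant $2$ and the variance $24$.

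For step (ii), write $\delta=\betahat-\beta_0$ and $r_i=X_i^T\delta$, so that $\hat{\epsilon}_i=\epsilon_i-r_i$. Because $g$ is continuously differentiable with denominator bounded away from zero (both $\hat m_2$ and $m_2$ converge to $\sigma_0^2>0$), a first-order expansion shows it suffices to prove $\sqrt n(\hat m_2-m_2)=o_p(1)$ and $\sqrt n(\hat m_4-m_4)=o_p(1)$. Expanding the residuals gives $\hat m_2-m_2=-\tfrac{2}{n}\sum_i\epsilon_i r_i+\fn\sum_i r_i^2$ and $\hat m_4-m_4=\fn\sum_i\left(-4\epsilon_i^3 r_i+6\epsilon_i^2 r_i^2-4\epsilon_i r_i^3+r_i^4\right)$. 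Each cross term is controlled by H\"older's inequality in the form $|\fn\sum_i\epsilon_i^k r_i|\le\|\fn\sum_i\epsilon_i^k X_i\|_\infty\|\delta\|_1$, together with the Lasso oracle inequalities guaranteed by Assumption 2 and $s_0=o(n/\log p)$, namely $\|\delta\|_1=O_p(s_0\sqrt{\log p/n})$ and $\fn\|X\delta\|^2=\fn\sum_i r_i^2=O_p(s_0\log p/n)$, plus the score bound $\|\fn X^T\epsilon\|_\infty=O_p(\sqrt{\log p/n})$. These immediately dispatch both terms of $\hat m_2-m_2$.

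The heart of the proof is the two nonnegligible cross terms in $\hat m_4-m_4$. For $\fn\sum_i\epsilon_i^3 r_i$ I would bound $\|\fn\sum_i\epsilon_i^3 X_i\|_\infty$ by a concentration inequality for the mean-zero products $\epsilon_i^3 X_{i,j}$, which under Assumption 1 and $H_0$ are only sub-Weibull (heavier-tailed than sub-exponential), so the maximal bound over the $p$ coordinates carries extra logarithmic factors; combined with $\|\delta\|_1$ this is of order $s_0\log p/n$ up to polylogarithmic factors. For $\fn\sum_i\epsilon_i^2 r_i^2$ I would use $\fn\sum_i\epsilon_i^2 r_i^2\le(\max_i\epsilon_i^2)\,\fn\sum_i r_i^2$, the Gaussian maximal inequality $\max_i\epsilon_i^2=O_p(\log n)=O_p(\log p)$ (valid since $p>\kappa n$), and the prediction bound, giving order $s_0(\log p)^2/n$. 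Multiplying by $\sqrt n$, both contributions are $O_p(s_0(\log p)^2/\sqrt n)$ up to lower-order factors, while the remaining terms ($\epsilon_i r_i^3$ and $r_i^4$) are of strictly smaller order once $\max_i|r_i|\le\max_i\|X_i\|_\infty\|\delta\|_1=O_p(s_0\log p/\sqrt n)=o_p(1)$ is used. Thus the assumption $s_0=o(\sqrt n/(\log p)^2)$ is exactly what forces $\sqrt n(\hat m_4-m_4)=o_p(1)$, and I expect the $\epsilon_i^2 r_i^2$ term, through the interplay of the maximal error bound and the Lasso prediction rate, to be the binding constraint that pins down this sparsity rate and the main technical obstacle of the argument.
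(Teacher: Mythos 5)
Your proposal is correct and follows the same architecture as the paper's proof: a bivariate CLT for the second and fourth sample moments of the true errors, the delta method applied to $f(\theta_1,\theta_2)=\theta_1/\theta_2^2-1$ (your standardization to the $\xi_i$ is a cosmetic variant of the paper's computation with $\sigma_0$; both give centering $2$ and variance $24$), and a term-by-term expansion of $\hat m_2$ and $\hat m_4$ controlled by the Lasso bounds $\|\hat{\beta}-\beta_0\|_1=O_p(s_0\sqrt{\log p/n})$ and $n^{-1}\|X(\hat{\beta}-\beta_0)\|^2=O_p(s_0\log p/n)$. You also correctly single out $n^{-1}\sum_i\epsilon_i^2r_i^2$, bounded via $\max_i\epsilon_i^2=O_p(\log p)$ times the prediction error, as the binding term of order $s_0(\log p)^2/n$ that pins down the sparsity requirement $s_0=o(\sqrt{n}/(\log p)^2)$; this matches the paper's treatment exactly, as do your bounds for the $\epsilon_ir_i^3$ and $r_i^4$ terms. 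The one genuine divergence is the cross term $n^{-1}\sum_i\epsilon_i^3r_i$: you propose a sub-Weibull concentration bound for $\max_j|n^{-1}\sum_i\epsilon_i^3X_{i,j}|$, conceding polylogarithmic losses, whereas the paper conditions on $\epsilon$ and exploits the fact that, given $\epsilon$, $n^{-1/2}\sum_iX_{i,j}\epsilon_i^3$ is \emph{exactly} $N\bigl(0,\Sigma_{j,j}\,n^{-1}\sum_i\epsilon_i^6\bigr)$ with $n^{-1}\sum_i\epsilon_i^6=O_p(1)$, so a plain Gaussian tail union bound yields $\|n^{-1}X^Tz\|_\infty=O_p(\sqrt{\log p/n})$ with no loss. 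Your route does go through: the product $\epsilon_i^3X_{i,j}$ is sub-Weibull with parameter $\theta=1/2$, so the maximal bound is $O_p\bigl(\sqrt{\log p/n}+(\log p)^2/n\bigr)$, and the slow-tail term is dominated whenever $(\log p)^3=o(n)$ --- which is implied whenever the sparsity condition permits $s_0\geq 1$, since that forces $(\log p)^4=o(n)$. What each approach buys: the paper's conditioning trick is sharper and simpler but leans on exact Gaussianity of the design given $\epsilon$ (Assumptions 1 and 2 jointly), while your sub-Weibull argument is more generic and would extend to non-Gaussian sub-Gaussian designs and errors, at the cost of the tail-regime bookkeeping you deferred.
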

We call the testing procedure using the test statistic $T$ and critical values from the normal distribution in \eqref{118} LCVT. \par    
Because our test statistic depends on the Lasso, our method requires sparsity of $\beta_0$. Compared with the sparsity level $s_0=o(\sqrt{n}/\log p)$, which is required by famous statistical inference methods for components of $\beta_0$ such as the debiased Lasso (\citealt{vbrd2014}, \citealt{zz2014}, \citealt{jm2014}) and the post-double selection (\citealt{bch2014}), our method requires a slightly stronger sparsity level: $s_0=o(\sqrt{n}/(\log p)^2)$. This sparsity level is crucial to evaluate the sample mean of fourth-powered residuals.

\section{Simulation Studies}
In this section, we present the simulation results. The purpose of the simulation studies is to conduct the 5\% significance level hypothesis test for the null hypothesis of homoskedasticity. We calculate the rejection rates of the null hypothesis by repeating the experiments 1,000 times. \par 
We consider two settings. One is a high-dimensional setting: $p=2n$. The other is a low-dimensional setting: $p=0.9n$. For the high-dimensional setting, we investigate the empirical sizes and powers of LCVT. For the low-dimensional setting, we compare the empirical sizes and powers of LCVT with those of ALRT and CVT. \par 
To use LCVT, we implement the Lasso using the R package \verb|glmnet|. The tuning parameter of the Lasso $\lambda_0$ is selected by the one standard error rule with the \verb|lambda.1se| option. Candidates of $\lambda_0$ are also generated by \verb|glmnet|.
\subsection{Simulation Design}
The covariance matrix of covariates is provided by one of the following matrices, as in \cite{whx2020}. 
\begin{align*}
    \text{Independent}&: \Sigma=I,\\  \text{Toeplitz}&: \Sigma_{i,j}=0.9^{|i-j|},\\   \text{Equi correlation}&: \Sigma_{i,j}=0.3^{1(i \neq j)}\ \text{or}\ \ \Sigma_{i,j}=0.9^{1(i \neq j)},
\end{align*}
where $1(\cdot)$ represents the indicator function. The data generating process is given by
\begin{equation*}
        Y_i=X_{i}^T\beta_0+\epsilon_i,
\end{equation*}
where $X_i \in \mathbb{R}^{p} \sim N(0, \Sigma)$ and $\epsilon_i|X_i \sim  N(0,\sigma(X_i)^2)$ for $i=1.\ldots,n$. We set $\sigma(\cdot)=1$ for homoskedastic cases. Forms of the function $\sigma(\cdot)$ for heteroskedastic cases and the coefficient $\beta_0$ are defined later. \par 
In the high-dimensional setting, we consider a setting similar to Setting 1 of \cite{whx2020}. For a given value $\tau$, let $\beta_0$ be one of the following $p$ dimensional vectors.
    \begin{align*}
    \text{sparse}&:(1, \underbrace{\tau,\ldots,\tau}_{4},0,\ldots,0)^T,\\
    \text{moderately sparse}&:(1,\underbrace{5\tau,\ldots,5\tau}_{9},\underbrace{\tau,\ldots,\tau}_{10},0,\ldots,0)^T, \\
    \text{dense}&: \left(1,\frac{\tau}{\sqrt{2}},\ldots,\frac{\tau}{\sqrt{p}}\right)^T.
\end{align*}
We choose $\tau$ so that $R^2$ is 0.8 when conditional error variances are homoskedastic. We also choose the same $\tau$ when conditional error variances are heteroskedastic.\par

To investigate the empirical powers of LCVT in the high-dimensional setting, we consider the following forms of the function $\sigma(\cdot)$ for heteroskedasticity.
\begin{itemize}
    \item Form 1: $\sigma(X_i)=\exp(\rho_0  X_i^T\beta_0)$ with $\rho_0=0.4$,
    \item Form 2: $\sigma(X_i)=\sqrt{\frac{(1+X_i^T\beta_0)^2}{\fn \sum_{i=1}^n (1+X_i^T\beta_0)^2}}$,
    \item Form 3: $\sigma(X_i)=(1+\sum_{j=1}^p\sin(10 X_{i,j})/j)^2$.
\end{itemize}
We refer to \cite{cw1983} for Form 1. We borrow Form 2 from \cite{bch2014}. We also refer to \cite{dm1998} for Form 3. \par     
In the low-dimensional setting, we consider a setting similar to \cite{ly2019}. $\beta_0$ is given by the following $p$ dimensional vector. 
\begin{equation*}
    \beta_0=(\underbrace{1,\ldots,1}_{\frac{p}{2}},0,\ldots,0).
\end{equation*}
To investigate the empirical powers of testing procedures in the low-dimensional setting, we consider the following forms of the function $\sigma(\cdot)$ for heteroskedasticity, as in \cite{ly2019}. 
\begin{itemize}
    \item Form 4: $\sigma(X_i)=\exp(0.5 \sum_{j=1}^{\frac{p}{10}}X_{i,j})$,
    \item Form 5: $\sigma(X_i)=(1+0.5\sum_{j=1}^{\frac{p}{10}}\sin(10X_{i,j}))^2$,
    \item Form 6: $\sigma(X_i)=(1+0.5\sum_{j=1}^{\frac{p}{10}}X_{i,j})^2$.
\end{itemize}
Under these settings, we report the rejection rates of the null hypothesis by testing procedures. Note that when the null hypothesis is true, rejection rates indicate empirical sizes. Conversely, when the alternative hypothesis is true, rejection rates indicate empirical powers. Although we also calculated standard deviations of rejection rates, we do not report them here to simplify the presentation of the simulation results.\par 
\subsection{Simulation Results}
First, we check the simulation results in the high-dimensional setting. Table \ref{t156} shows the rejection rates of LCVT when conditional error variances are homoskedastic in the high-dimensional cases. When $n=100$ and $p=200$, the empirical sizes obtained by LCVT are slightly smaller than the nominal size level. This indicates that the inference results obtained by our method may be slightly conservative when the sample size is small. However, the empirical sizes of LCVT do not significantly deviate from the nominal size level. In addition, there are no significant differences in rejection rates among covariance and coefficient structures. When $n=500$ and $p=1000$, the empirical sizes of LCVT are close to the nominal size level among all covariance and coefficient structures. Therefore, it is expected that LCVT can control sizes when the sample size is large.\par 

Table \ref{t180} shows the rejection rates of LCVT when conditional error variances are heteroskedastic and $n=100$ and $p=200$. When covariates are independent, the empirical powers of LCVT tend to be low. This is because the variation in conditional error variances is small in this case, causing the test statistic to tend to be small. On the other hand, when correlations occur among covariates, the empirical powers of LCVT are high. Correlations increase the variation in conditional error variances, making it easier for LCVT to detect heteroskedasticity. \par

Table \ref{t227} shows the rejection rates of LCVT when conditional error variances are heteroskedastic and $n=500$ and $p=1000$. These results show that LCVT obtains high empirical powers among all coefficient and covariance structures. Therefore, we expect that LCVT will have high powers when the sample size is large. \par 

Next, we check simulation results in the low-dimensional setting. Table \ref{t245} shows the rejection rates of LCVT, ALRT, and CVT when conditional error variances are homoskedastic. When $n=100$ and $p=90$, the empirical sizes of ALRT are accurate. On the other hand, LCVT and CVT produce slightly conservative results. When $n=500$ and $p=450$, ALRT is successful in size control. The large sample size slightly increases the rejection rates of LCVT, and they are more accurate than those of CVT.\par

Table \ref{t289} shows the rejection rates of LCVT, ALRT, and CVT when conditional error variances are heteroskedastic and $n=100$ and $p=90$. LCVT generally tends to yield high empirical powers even though it is conservative when covariates are independent or have a low correlation in Form 5. On the other hand, the empirical powers of ALRT and CVT are too low. This is because OLS estimates are unstable when $p$ is close to $n$ due to the near singularity of the sample covariance.\par 

Table \ref{t328} presents the rejection rates of LCVT, ALRT, and CVT when conditional error variances are heteroskedastic and $n=500$ and $p=450$. The large sample size increases the empirical powers of LCVT, which correctly rejects the null hypothesis in general. On the other hand, the results of ALRT and CVT are too conservative even when the sample size is large. ALRT generally fails to reject the null hypothesis. CVT obtains high empirical powers in Form 4, but its empirical powers are too low in the other forms of heteroskedasticity. These results indicate that a large sample size may not improve the powers of OLS-based testing methods when $p$ is close to $n$. \par  

In conclusion, the simulation studies reveal the following. In the high-dimensional setting, overall, LCVT obtains accurate empirical sizes and powers. In the low-dimensional setting, LCVT can also control the empirical sizes and obtains significantly better empirical powers than ALRT and CVT when $p$ is close to $n$. Therefore, LCVT would be promising for detecting heteroskedasticity when $p>n$ and a reliable alternative to OLS-based testing methods when $p<n$ and $p$ is close to $n$. 

\begin{table}
\centering 
\caption{Rejection rates of LCVT when $H_0$ is true and $p=2n$. }
\begin{tabular}{ccc}
\midrule
Covariance    & $n=100$                  & $n=500$                \\ \midrule
              & \multicolumn{2}{c}{$\beta_0$ is sparse}            \\ \cmidrule{2-3} 
Independent   & 0.031                  & 0.052                \\
Equi corr 0.3 & 0.039                  & 0.045                \\
Equi corr 0.9 & 0.037                  & 0.055                \\
Toeplitz 0.9  & 0.036                  & 0.04                 \\ \cmidrule{2-3} 
              & \multicolumn{2}{c}{$\beta_0$ is moderately sparse} \\ \cmidrule{2-3} 
Independent   & 0.026                  & 0.045                \\
Equi corr 0.3 & 0.04                   & 0.039                \\
Equi corr 0.9 & 0.034                  & 0.051                \\
Toeplitz 0.9  & 0.037                  & 0.038                \\ \cmidrule{2-3} 
              & \multicolumn{2}{c}{$\beta_0$ is dense}             \\ \cmidrule{2-3} 
Independent   & 0.037                  & 0.047                \\
Equi corr 0.3 & 0.032                  & 0.036                \\
Equi corr 0.9 & 0.033                  & 0.045                \\
Toeplitz 0.9  & 0.033                  & 0.037                \\ \midrule
\end{tabular}
\label{t156}
\end{table}

\begin{table}
\centering 
\caption{Rejection rates of LCVT when $H_1$ is true and $n=100$ and $p=200$.}
\begin{tabular}{cccc}
\midrule
Covariance    & Form 1   & Form 2   & Form 3      \\ \midrule
              & \multicolumn{3}{c}{$\beta_0$ is sparse}            \\ \cmidrule{2-4} 
Independent   & 0.86     & 0.904     & 0.904           \\
Equi corr 0.3 & 0.873    & 0.881     & 0.948           \\
Equi corr 0.9 & 0.909    & 0.895     & 0.974            \\
Toeplitz 0.9  & 0.912    & 0.926     & 0.972           \\ \cmidrule{2-4} 
              & \multicolumn{3}{c}{$\beta_0$ is moderately sparse} \\ \cmidrule{2-4} 
Independent   & 0.803    & 0.757     & 0.882           \\
Equi corr 0.3 & 0.871    & 0.83      & 0.956           \\
Equi corr 0.9 & 0.913    & 0.908     & 0.976           \\
Toeplitz 0.9  & 0.92     & 0.925     & 0.96            \\ \cmidrule{2-4} 
              & \multicolumn{3}{c}{$\beta_0$ is dense}             \\ \cmidrule{2-4} 
Independent   & 0.738    & 0.546     & 0.855           \\
Equi corr 0.3 & 0.9      & 0.854     & 0.97            \\
Equi corr 0.9 & 0.907    & 0.906     & 0.978           \\
Toeplitz 0.9  & 0.864    & 0.78      & 0.947           \\ \midrule
\end{tabular}
\label{t180}
\end{table}

\begin{table}
\centering
\caption{Rejection rates of LCVT when $H_1$ is true and $n=500$ and $p=1000$. }
\begin{tabular}{cccc}
\midrule
Covariance    & Form 1    & Form 2   & Form 3      \\ \midrule
              & \multicolumn{3}{c}{$\beta_0$ is sparse}             \\ \cmidrule{2-4} 
Independent   & 1         & 1         & 1                   \\
Equi corr 0.3 & 1         & 1         & 1                   \\
Equi corr 0.9 & 1         & 1         & 1                   \\
Toeplitz 0.9  & 1         & 1         & 1                   \\ \cmidrule{2-4} 
              & \multicolumn{3}{c}{$\beta_0$ is moderately sparse} \\ \cmidrule{2-4} 
Independent   & 1         & 1         & 1                   \\
Equi corr 0.3 & 1         & 1         & 1                   \\
Equi corr 0.9 & 1         & 1         & 1                   \\
Toeplitz 0.9  & 1         & 1         & 1                   \\ \cmidrule{2-4} 
              & \multicolumn{3}{c}{$\beta_0$ is dense}              \\ \cmidrule{2-4} 
Independent   & 1         & 0.996     & 1                   \\
Equi corr 0.3 & 1         & 1         & 1                   \\
Equi corr 0.9 & 1         & 1         & 1                   \\
Toeplitz 0.9  & 1         & 1         & 1                   \\ \midrule
\end{tabular}
\label{t227}
\end{table}

\begin{table}
\centering
\caption{Rejection rates when $H_0$ is true and $p=0.9n$.}
\begin{tabular}{cccc}
\midrule
Covariance    & LCVT       & ALRT     & CVT      \\ \midrule
              & \multicolumn{3}{c}{$n=100$}  \\ \cmidrule{2-4} 
Independent   & 0.025      & 0.044    & 0.03     \\
Equi corr 0.3 & 0.028      & 0.044    & 0.03     \\
Equi corr 0.9 & 0.022      & 0.044    & 0.03     \\
Toeplitz 0.9  & 0.04       & 0.044    & 0.03     \\ \cmidrule{2-4} 
              & \multicolumn{3}{c}{$n=500$} \\ \cmidrule{2-4} 
Independent   & 0.05       & 0.05     & 0.041    \\
Equi corr 0.3 & 0.047      & 0.05     & 0.041    \\
Equi corr 0.9 & 0.055      & 0.05     & 0.041    \\
Toeplitz 0.9  & 0.044      & 0.05     & 0.041    \\ \midrule
\end{tabular}
\label{t245}
\end{table}

\begin{table}
\centering
\caption{Rejection rates when $H_1$ is true and $n=100$ and $p=90$.}
\begin{tabular}{cccc}
\midrule
Covariance    & LCVT    & ALRT    & CVT    \\ \midrule
              & \multicolumn{3}{c}{Form 4} \\ \cmidrule{2-4} 
Independent   & 0.819   & 0.052   & 0.081  \\
Equi corr 0.3 & 1       & 0.249   & 0.123  \\
Equi corr 0.9 & 1       & 0.969   & 0.163  \\
Toeplitz 0.9  & 1       & 0.899   & 0.145  \\ \cmidrule{2-4} 
              & \multicolumn{3}{c}{Form 5} \\ \cmidrule{2-4} 
Independent   & 0.289   & 0.047   & 0.043  \\
Equi corr 0.3 & 0.453   & 0.051   & 0.041  \\
Equi corr 0.9 & 0.626   & 0.057   & 0.046  \\
Toeplitz 0.9  & 0.814   & 0.057   & 0.043  \\ \cmidrule{2-4} 
              & \multicolumn{3}{c}{Form 6} \\ \cmidrule{2-4} 
Independent   & 0.667   & 0.054   & 0.053  \\
Equi corr 0.3 & 0.966   & 0.048   & 0.047  \\
Equi corr 0.9 & 0.998   & 0.058   & 0.038  \\
Toeplitz 0.9  & 0.989   & 0.052   & 0.039  \\ \midrule
\end{tabular}
\label{t289}
\end{table}

\begin{table}
\centering
\caption{Rejection rates when $H_1$ is true and $n=500$ and $p=450$.}
\begin{tabular}{cccc}
\midrule
Covariance    & LCVT    & ALRT    & CVT     \\ \midrule
              & \multicolumn{3}{c}{Form 4}  \\ \cmidrule{2-4} 
Independent   & 1       & 0.167       & 0.949   \\
Equi corr 0.3 & 1       & 0.243   & 0.998   \\
Equi corr 0.9 & 1       & 0.221   & 0.999   \\
Toeplitz 0.9  & 1       & 0.195   & 0.999   \\ \cmidrule{2-4} 
              & \multicolumn{3}{c}{Form 5}  \\ \cmidrule{2-4} 
Independent   & 0.992   & 0.056   & 0.203   \\
Equi corr 0.3 & 1       & 0.061   & 0.183   \\
Equi corr 0.9 & 1       & 0.061   & 0.188   \\
Toeplitz 0.9  & 1       & 0.059   & 0.185   \\ \cmidrule{2-4} 
              & \multicolumn{3}{c}{Form 6}  \\ \cmidrule{2-4} 
Independent   & 1       & 0.063   & 0.175   \\
Equi corr 0.3 & 1       & 0.053   & 0.186   \\
Equi corr 0.9 & 1       & 0.066   & 0.176 \\
Toeplitz 0.9  & 1       & 0.049   & 0.199   \\ \midrule
\end{tabular}
\label{t328}
\end{table}

\section{Real data example}

In this section, we apply testing methods of heteroskedasticity to real economic data. We revisit \cite{hr1978}, who investigated the demand for clean air using Boston housing data. To this aim, they considered the linear regression model
\begin{align*}
\log\text{MV}_i&= \alpha + \beta_1\text{NOX}_i^2 +\beta_2\text{RM}_i^2+\beta_3\log\text{DIS}_i+\beta_4\text{AGE}_i+\beta_5\log\text{RAD}_i+\beta_6\text{TAX}_i\\
&+\beta_7\text{PTRATIO}_i+\beta_8(B_i-0.63)^2+\beta_9\log\text{LSTAT}_i+\beta_{10}\text{CRIM}_i\\
&+\beta_{11}\text{ZN}_i+\beta_{12}\text{INDUS}_i +\beta_{13}\text{CHAS}_i +\epsilon_i,
\end{align*}
where $n=506$, and $\text{MV}_i$ denotes the median value of house prices. Explanations of the other variables are given in \cite{gp1996}. The dataset is available from the R package \verb|mlbench|. \par

In this example, we investigate the sensitivity of the testing results by LCVT, ALRT, and CVT by adding an irrelevant artificial covariate. First, we applied these testing methods to the original model. The studentized test statistics of LCVT, ALRT, and CVT are 15.982, 4.353, and 14, respectively. Therefore, all methods strongly rejected the null hypothesis of homoskedasticity with a 5\% significance level, and it is reasonable to believe that conditional error variances in the original model are heteroskedastic.  \par 

Next, we add the irrelevant artificial covariate $W_i \in \mathbb{R}^d \sim N(0, \Sigma)$ to the original model:
\begin{equation}
     \log\text{MV}_i=\alpha+X_i^T\beta+W_i^T\theta +\epsilon_i, \label{340}
\end{equation}
where the covariance matrix is Toeplitz: $\Sigma_{i,j}=0.9^{|i-j|}$, $d$ is specified in Table \ref{t354}, and $X_i$ includes the 13 variables in the original model. Because $W_i$ is irrelevant, we know that the true $\theta$ is zero. Therefore, we also know that conditional error variances in the model \eqref{340} are heteroskedastic. \par

Table \ref{t354} shows the rejection rates for the null hypothesis of homoskedasticity in the model \eqref{340} over 1,000 repetitions. As the number of irrelevant covariates increases, the rejection rates of ALRT and CVT decrease. Similar to the simulation results in Section 4, it is difficult for ALRT to reject the null hypothesis when there are many covariates. Although inference results of CVT are more robust to the number of covariates than ALRT, its rejection rates decrease significantly when the number of irrelevant covariates is greater than 200. On the other hand, the rejection rates of LCVT are high even when the number of irrelevant covariates is significantly greater than the sample size. This is because the Lasso selects many relevant covariates and omits many irrelevant covariates. Therefore, the test statistic of LCVT in the model \eqref{340} does not deviate from that in the original model, so LCVT rejects the null hypothesis of homoskedasticity. \par

This real data example shows that the inference results of LCVT are robust to irrelevant covariates. On the other hand, the inference results of ALRT and CVT are sensitive to irrelevant covariates. These results imply that LCVT is useful for heteroskedasticity detection, particularly when linear regression models possibly have many irrelevant covariates. 
\begin{table}
\centering
\caption{Rejection rates for the null hypothesis of homoskedasticity in the model \eqref{340} with $n=506$ over 1,000 repetitions.}
\begin{tabular}{ccccccccc}
\midrule
$d$    & 100   & 200   & 300   & 400   & 500 & 1000 & 2000 & 3000 \\ \midrule
LCVT & 1     & 1     & 1     & 1     & 1   & 1    & 1  &1  \\
ALRT & 0.877 & 0.443 & 0.15 & 0.056 & -   & -    & - &-    \\
CVT  & 1     & 0.973 & 0.49 & 0.098 & -   & -    & - &-   \\ \midrule
\end{tabular}
\label{t354}
\end{table}

\section{Conclusion}
In this study, we proposed a testing procedure of heteroskedasticity called LCVT for high-dimensional linear regression models. LCVT is derived from residuals of the Lasso. Therefore, LCVT is available even when the number of covariates is larger than the sample size. Under normality assumptions on errors and covariates, we demonstrated that our test statistic has asymptotic normality under the null hypothesis of homoskedasticity. Simulation studies and real data applications show that LCVT performs well. Therefore, we believe that LCVT is an effective method for detecting heteroskedasticity in high-dimensional linear regression.

\section*{Acknowledgments}
The author would like to thank Professor Naoya Sueishi for his valuable comments, which give significant improvement to the presentation. This research was supported by JST SPRING Grant Number JPMJFS2126.

\section*{Appendix}
\begin{proof}[Proof of Theorem \ref{thm1}]
Under Assumption 1 and the null hypothesis of homoskedasticity, the Lindeberg-Levy Central Limit Theorem yields  
\begin{equation*}
    \sqrt{n}\begin{pmatrix}
    \frac{1}{n}\sum_{i=1}^n\epsilon_i^4-3\sigma_0^4 \\
    \frac{1}{n}\sum_{i=1}^n\epsilon_i^2-\sigma_0^2
\end{pmatrix}
\xrightarrow{d}N(0, \Sigma_0),
\end{equation*}
where
\begin{equation*}
    \Sigma_0=\begin{pmatrix}
    \text{Var}(\epsilon_i^4) & \text{Cov}(\epsilon_i^4, \epsilon_i^2) \\
    \text{Cov}(\epsilon_i^4, \epsilon_i^2) & \text{Var}(\epsilon_i^2)
    \end{pmatrix}=\begin{pmatrix}
    96\sigma_0^8 & 12\sigma_0^6 \\
    12\sigma_0^6 & 2 \sigma_0^4
    \end{pmatrix}.
\end{equation*}
For $\theta=(\theta_1,\theta_2)^T$, let $f(\theta)=\theta_1/\theta_2^2-1$. Then, partial derivatives are given by
\begin{equation*}
\frac{\partial f(\theta)}{\partial \theta_1}=\frac{1}{\theta_2^2},\ \  \frac{\partial f(\theta)}{\partial \theta_2}=-\frac{2\theta_1}{\theta_2^3}.  
\end{equation*}
Let
\begin{equation*}
 \hat{\theta}=\left(\frac{1}{n} \sum_{i=1}^n \epsilon_i^4, \frac{1}{n}\sum_{i=1}^n \epsilon_i^2\right)^T,\ \ \theta_0=(3\sigma_0^4,\sigma_0^2)^T.   
\end{equation*} 
Then, the delta method yields \begin{equation}
    \sqrt{n}(f(\hat{\theta})-f(\theta_0))\xrightarrow{d}N \left(0, \frac{\partial f(\theta_0)}{\partial \theta ^T} \Sigma_0 \frac{\partial f(\theta_0)}{\partial \theta} \right) \label{203},
\end{equation}
where
\begin{equation}
    f(\hat{\theta})=\frac{\frac{1}{n}\sum_{i=1}^n \epsilon_i^4}{(\frac{1}{n}\sum_{i=1}^n \epsilon_i^2)^2}-1,\ \  f(\theta_0)=\frac{3\sigma_0^4}{(\sigma_0^2)^2}-1=2, \label{205}
\end{equation}
\begin{equation}
    \frac{\partial f (\theta_0)}{\partial \theta ^T} \Sigma_0 \frac{\partial f(\theta_0)}{\partial \theta}=\left(\frac{1}{\sigma_0^4}, \frac{-6 }{\sigma_0^2} \right)\begin{pmatrix}
    96\sigma_0^8 & 12\sigma_0^6 \\
    12\sigma_0^6 & 2 \sigma_0^4
    \end{pmatrix}
    \begin{pmatrix}
    \frac{1}{\sigma_0^4}\\
    \frac{-6}{\sigma_0^2}
    \end{pmatrix}=24. \label{217}
\end{equation}
Therefore, it is sufficient to show that 
\begin{equation*}
   \sqrt{n}(T-2)=\sqrt{n}(f(\hat{\theta})-f(\theta_0))+o_p(1).
\end{equation*}
We know that
\begin{equation*}
    \hat{\epsilon}_i^2=\epsilon_i^2-2\epsilon_iX_i^T(\betahat-\beta_0)+\{X_i^T(\betahat-\beta_0)\}^2,
\end{equation*}
so we obtain
\begin{equation}
    \frac{1}{n}\sum_{i=1}^n\hat{\epsilon}_i^2= \frac{1}{n}\sum_{i=1}^n\epsilon_i^2- \frac{2}{n}\sum_{i=1}^n\epsilon_iX_i^T(\betahat-\beta_0)+ \frac{1}{n}\sum_{i=1}^n\{X_i^T(\betahat-\beta_0)\}^2, \label{232}
\end{equation}
and 
\begin{align}
    \frac{1}{n}\sum_{i=1}^n\hat{\epsilon}_i^4&= \frac{1}{n}\sum_{i=1}^n\epsilon_i^4+ \frac{6}{n}\sum_{i=1}^n\{\epsilon_iX_i^T(\betahat-\beta_0)\}^2+ \frac{1}{n}\sum_{i=1}^n\{X_i^T(\betahat-\beta_0)\}^4 \nonumber \\
    &- \frac{4}{n}\sum_{i=1}^n\epsilon_i^3X_i^T(\betahat-\beta_0)- \frac{4}{n}\sum_{i=1}^n\epsilon_i\{X_i^T(\betahat-\beta_0)\}^3. \label{238}
\end{align} 
First, we evaluate the sample mean of squared residuals \eqref{232}. Let $C_0$ be a sufficiently large positive constant satisfying $C_0 \geq \sigma_0 \sqrt{C_{\max}}$, and set $\lambda_0=C_0\sqrt{2 \log p/n}$. Recall that $C_{\max}$ satisfies $\max_{j}\Sigma_{j,j}<C_{\max}$. Under Assumptions 1 and 2 and the null hypothesis of homoskedasticity, the standard arguments of a normal distribution and chi-square distribution yield 
\begin{equation*}
P\left(\frac{1}{n}\|X^T\epsilon\|_\infty >\lambda_0\right)=O\left(\frac{1}{\sqrt{\log p}}\right).
\end{equation*}
Recall that under Assumption 2 and $s_0=o(\sqrt{n}/(\log p)^2)$, the compatibility condition holds with probability tending to one. Therefore, the standard arguments of the Lasso yield
\begin{equation*}
    \|\betahat-\beta_0\|_1=O_p\left(\sqrt{\frac{s_0^2\log p}{n}}\right),\ \ \frac{1}{n}\|X(\betahat-\beta_0)\|^2=O_p\left(\frac{s_0\log p}{n}\right),
\end{equation*}
under Assumptions 1 and 2 and the null hypothesis of homoskedasticity. See Lemmas 6.1 and 6.3 and Theorem 6.1 in \cite{bv2011}. By H\"{o}lder's inequality, we obtain 
\begin{align*}
    \left|\frac{2}{n}\sum_{i=1}^n\epsilon_iX_i^T(\betahat-\beta_0)\right|\leq \frac{2}{n}\|X^T\epsilon\|_{\infty}\|\betahat-\beta_0\|_1\leq \frac{Cs_0\log p}{n},
\end{align*}
with probability tending to one, so we obtain 
\begin{equation*}
   \frac{2}{n}\sum_{i=1}^n\epsilon_iX_i^T(\betahat-\beta_0)=O_p\left(\frac{s_0\log p}{n}\right). 
\end{equation*}
Note that 
\begin{equation*}
    \frac{1}{n}\sum_{i=1}^n\{X_i^T(\betahat-\beta_0)\}^2=\frac{1}{n}\|X(\betahat-\beta_0)\|^2=O_p\left(\frac{s_0 \log p}{n}\right).
\end{equation*}
Therefore, we obtain
\begin{equation}
    \frac{1}{n}\sum_{i=1}^n \hat{\epsilon}_i^2=\frac{1}{n}\sum_{i=1}^n\epsilon_i^2+O_p\left(\frac{s_0 \log p}{n}\right). \label{258}
\end{equation}
Because the first term on the right-hand side of \eqref{258} is $O_p(1)$ due to the law of large numbers, we obtain 
\begin{equation*}
    \left(\frac{1}{n}\sum_{i=1}^n \hat{\epsilon}_i^2\right)^2=\left(\frac{1}{n}\sum_{i=1}^n\epsilon_i^2\right)^2+O_p\left(\frac{s_0 \log p}{n}\right)+O_p\left(\frac{s_0^2 (\log p)^2}{n^2}\right).
\end{equation*}
Thus, we obtain 
\begin{equation}
    \left(\frac{1}{n}\sum_{i=1}^n \hat{\epsilon}_i^2\right)^2=\left(\frac{1}{n}\sum_{i=1}^n\epsilon_i^2\right)^2+o_p\left(\frac{1}{\sqrt{n}}\right) \label{259}.
\end{equation}
Next, we evaluate the sample mean of fourth-powered residuals \eqref{238}. By normality of errors and $p>\kappa n$ for some $0<\kappa<1$, we obtain  
\begin{align*}
    P\left(\max_{1\leq i\leq n}|\epsilon_i|>\sigma_0\sqrt{2 \log  p}\right)&\leq \sum_{i=1}^n P(|\epsilon_i|>\sigma_0\sqrt{2 \log p})\\
    &\leq \frac{1}{\sigma_0 \sqrt{\pi \log p}}\frac{n}{p}\\  
    & \leq \frac{1}{\sigma_0 \sqrt{\pi \log p}} \frac{1}{\kappa}\\
    &=O\left(\frac{1}{\sqrt{\log p}}\right).
\end{align*}
Therefore, the second term on the right-hand side of \eqref{238} is bounded by 
\begin{align*}
    \frac{6}{n}\sum_{i=1}^n\{\epsilon_iX_i^T(\betahat-\beta_0)\}^2&\leq  \frac{12\sigma_0^2\log p}{n}\sum_{i=1}^n\{X_i^T(\betahat-\beta_0)\}^2\leq \frac{Cs_0(\log p)^2}{n},
\end{align*}
with probability tending to one. Because $s_0=o(\sqrt{n}/(\log p)^2)$, we obtain 
\begin{equation}
\frac{6}{n}\sum_{i=1}^n\{\epsilon_iX_i^T(\betahat-\beta_0)\}^2=o_p\left(\frac{1}{\sqrt{n}}\right).    \label{267}
\end{equation}
Next, we evaluate the third term on the right-hand side of \eqref{238}. Because $X_{i,j}\sim N(0,\Sigma_{j,j})$ for $i=1,\ldots,n$, Gaussian tails yield 
\begin{align*}
P(\max_{i,j}|X_{i,j}|>2\sqrt{C_{\max}\log p})&\leq \sum_{i=1}^n\sum_{j=1}^pP(|X_{i,j}|>2\sqrt{C_{\max} \log p})\\
&\leq \sum_{j=1}^p \frac{n}{\sqrt{2 \pi \Sigma_{j,j}}}\frac{1}{\sqrt{C_{\max}\log p}}\exp\left(-\frac{2C_{\max}\log p}{\Sigma_{j,j}}\right)\\
&\leq \frac{1}{\sqrt{2 \pi C_{\min}}}\frac{1}{\sqrt{C_{\max}\log p}}\frac{n}{p}\\
&\leq \frac{1}{\sqrt{2 \pi C_{\min}}}\frac{1}{\sqrt{C_{\max}\log p}}\frac{1}{\kappa }= O\left(\frac{1}{\sqrt{\log p}}\right). 
\end{align*}
Thus, for $i=1,\ldots,n$, we obtain 
\begin{align*}
    |X_i^T(\betahat-\beta_0)|&\leq \sum_{j=1}^p|X_{i,j}||\betahat_j-\beta_{0j}|\\
    &\leq \max_{i,j}|X_{i,j}| \sum_{j=1}^p|\betahat_j-\beta_{0j}|\\
    &\leq 2\sqrt{C_{\max}\log p}\|\betahat-\beta_0\|_1,
\end{align*}
with probability tending to one. Therefore, the third term is bounded by
\begin{align*}
    \frac{1}{n}\sum_{i=1}^n\{X_i^T(\betahat-\beta_0)\}^4&\leq 4C_{\max}\log p\|\betahat-\beta_{0}\|_1^2\frac{1}{n}\sum_{i=1}^n\{X_i^T(\betahat-\beta_0)\}^2  \\
    &\leq \frac{Cs_0^3(\log p)^3}{n^2},
\end{align*}
with probability tending to one. Thus, we obtain 
\begin{align}
 \frac{1}{n}\sum_{i=1}^n\{X_i^T(\betahat-\beta_0)\}^4=o_p\left(\frac{1}{\sqrt{n}}\right). \label{282}
\end{align}
Next, we evaluate the fourth term on the right-hand side of \eqref{238}. Let $z=(\epsilon_1^3,\ldots, \epsilon_n^3)^T$. Then,
\begin{equation*}
    \frac{1}{n}\sum_{i=1}^n\epsilon_i^3X_i^T(\betahat-\beta_0)=\frac{1}{n}z^TX(\betahat-\beta_0),
\end{equation*}
and by H\"{o}lder's inequality 
\begin{equation*}
    \frac{1}{n}|z^TX(\betahat-\beta_0)|\leq \frac{1}{n}\|X^Tz\|_\infty \|\betahat-\beta_0\|_1.
\end{equation*}
Note that
\begin{align*}
    &P\left(\frac{1}{\sqrt{n}}\|X^Tz\|_\infty>\left\{\frac{\Sigma_{j,j}}{n}\sum_{i=1}^n\epsilon_i^6\right\}^{1/2} \sqrt{2\log p} \right)\\
    &=P\left(\frac{1}{\sqrt{n}}\max_{1\leq j \leq p}|X_{(j)}^Tz|>\left\{\frac{\Sigma_{j,j}}{n}\sum_{i=1}^n\epsilon_i^6\right\}^{1/2} \sqrt{2\log p}\right)  \\
    &\leq\sum_{j=1}^p P\left(\frac{1}{\sqrt{n}}|X_{(j)}^Tz|>\left\{\frac{\Sigma_{j,j}}{n}\sum_{i=1}^n\epsilon_i^6\right\}^{1/2} \sqrt{2\log p}\right)\\
    &= \sum_{j=1}^pP\left(\frac{1}{\sqrt{n}}\left|\sum_{i=1}^nX_{i,j}\epsilon_i^3\right|>\left\{\frac{\Sigma_{j,j}}{n}\sum_{i=1}^n\epsilon_i^6\right\}^{1/2} \sqrt{2\log p}\right) \\
    &=E\left[ \sum_{j=1}^pP\left(\frac{1}{\sqrt{n}}\left| \sum_{i=1}^nX_{i,j}\epsilon_i^3\right| >\left\{\frac{\Sigma_{j,j}}{n}\sum_{i=1}^n\epsilon_i^6\right\}^{1/2} \sqrt{2\log p}\Bigr| \epsilon \right)\right],
\end{align*}
where $X_{(j)}$ denotes the $j$th column of $X$. Because $X$ and $\epsilon$ are independent under the null hypothesis and  $X_{i,j} \sim N(0,\Sigma_{j,j})$ independently for $i=1,\ldots n$, we obtain
\begin{align*}
\frac{1}{\sqrt{n}}\sum_{i=1}^nX_{i,j}\epsilon_i^3|\epsilon \sim N\left(0,\Sigma_{j,j}\frac{1}{n}\sum_{i=1}^n\epsilon_i^6 \right),    
\end{align*}
 and 
 \begin{align*}
\left(\Sigma_{j,j} \frac{1}{n}\sum_{i=1}^n\epsilon_i^6\right)^{-1/2}\frac{1}{\sqrt{n}}\sum_{i=1}^nX_{i,j}\epsilon_i^3|\epsilon \sim N(0,1).     
 \end{align*}
Therefore, we obtain  
\begin{align*}
 &P\left(\frac{1}{\sqrt{n}}\left| \sum_{i=1}^nX_{i,j}\epsilon_i^3\right| >\left\{\frac{\Sigma_{j,j}}{n}\sum_{i=1}^n\epsilon_i^6\right\}^{1/2} \sqrt{2\log p}\Bigr| \epsilon \right)\\
    &\leq 2P\left(\frac{1}{\sqrt{n}} \sum_{i=1}^nX_{i,j}\epsilon_i^3 >\left\{\frac{\Sigma_{j,j}}{n}\sum_{i=1}^n\epsilon_i^6\right\}^{1/2}\sqrt{2\log p}\Bigr| \epsilon \right)\\
    &=2P\left(\left\{\Sigma_{j,j}\frac{1}{n}\sum_{i=1}^n\epsilon_i^6\right\}^{-1/2}\frac{1}{\sqrt{n}} \sum_{i=1}^nX_{i,j}\epsilon_i^3 > \sqrt{2\log p}\Bigr| \epsilon \right)\leq \frac{1}{\sqrt{\pi \log p}}\frac{1}{p},
\end{align*}
and 
\begin{equation*}
   E\left[ \sum_{j=1}^pP\left(\frac{1}{\sqrt{n}}\left| \sum_{i=1}^nX_{i,j}\epsilon_i^3\right| >\left\{\frac{\Sigma_{j,j}}{n}\sum_{i=1}^n\epsilon_i^6\right\}^{1/2} \sqrt{2\log p}\Bigr| \epsilon \right)\right]\leq \frac{1}{\sqrt{\pi \log p}}.
\end{equation*}
Now, we obtain 
\begin{equation*}
    P\left(\frac{1}{\sqrt{n}}\|X^Tz\|_\infty>\left\{\frac{\Sigma_{j,j}}{n}\sum_{i=1}^n\epsilon_i^6\right\}^{1/2} \sqrt{2\log p} \right)=O\left(\frac{1}{\sqrt{\log p}}\right).
\end{equation*}
Thus, we obtain
\begin{align*}
     \frac{1}{n}\|X^Tz\|_\infty \|\betahat-\beta_0\|_1\leq \left\{\frac{\Sigma_{j,j}}{n}\sum_{i=1}^n\epsilon_i^6\right\}^{1/2}\sqrt{\frac{2\log p}{n}}\|\betahat-\beta_0\|_1\leq \frac{Cs_0 \log p }{n},
\end{align*}
with probability tending to one. Therefore, the convergence rate of the fourth term is given by
\begin{equation}
    \frac{1}{n}\sum_{i=1}^n\epsilon_i^3X_i^T(\betahat-\beta_0)=o_p\left(\frac{1}{\sqrt{n}}\right). \label{318}
\end{equation}
Finally, we evaluate the fifth term on the right-hand side of \eqref{238}. The above arguments yield 
\begin{align*}
    4 \left|\frac{1}{n}\sum_{i=1}^n\epsilon_i\{X_i^T(\betahat-\beta_0)\}^3\right|&\leq 4 \frac{1}{n}\sum_{i=1}^n|\epsilon_i\{X_i^T(\betahat-\beta_0)\}^3| \nonumber \\
    &\leq 8\sqrt{2C_{\max}}\sigma_0 \log p\|\betahat-\beta_0\|_1 \frac{1}{n}\sum_{i=1}^n\{X_i^T(\betahat-\beta_0)\}^2 \nonumber \\
    &\leq \frac{Cs_0^2(\log p)^{5/2}}{n^{3/2}},
\end{align*}
with probability tending to one. Therefore, we obtain 
\begin{align}
\frac{4}{n}\sum_{i=1}^n\epsilon_i\{X_i^T(\betahat-\beta_0)\}^3=o_p\left(\frac{1}{\sqrt{n}}\right)\label{324}.
\end{align}
As a result, using \eqref{267}, \eqref{282}, \eqref{318}, and \eqref{324}, the equation \eqref{238} can be rewritten as  
\begin{equation*}
    \frac{1}{n}\sum_{i=1}^n\hat{\epsilon}_i^4=\frac{1}{n}\sum_{i=1}^n\epsilon_i^4+o_p\left(\frac{1}{\sqrt{n}}\right).
\end{equation*}
Thus, the test statistic $T$ can be expressed as 
\begin{align*}
T&=\frac{\frac{1}{n}\sum_{i=1}^n\hat{\epsilon}_i^4}{(\frac{1}{n}\sum_{i=1}^n\hat{\epsilon}_i^2)^2}-1\\
&=\frac{\frac{1}{n}\sum_{i=1}^n\epsilon_i^4+o_p\left(\frac{1}{\sqrt{n}}\right)}{(\frac{1}{n}\sum_{i=1}^n\epsilon_i^2)^2+o_p\left(\frac{1}{\sqrt{n}}\right)}-1\\
&=\frac{\frac{1}{n}\sum_{i=1}^n\epsilon_i^4}{(\frac{1}{n}\sum_{i=1}^n\epsilon_i^2)^2}-1+o_p\left(\frac{1}{\sqrt{n}}\right).
\end{align*}
Recalling \eqref{205}, we obtain 
\begin{equation*}
    \sqrt{n}(T-2)=\sqrt{n}(f(\hat{\theta})-f(\theta_0))+o_p(1).
\end{equation*}
\eqref{203} and \eqref{217} yield
\begin{equation*}
    \sqrt{n}(T-2)\xrightarrow{d}N(0,24).
\end{equation*}
This completes the proof.
\end{proof}

\bibliographystyle{apalike} 
\bibliography{bib}

\begin{thebibliography}{}

\bibitem[Belloni et~al., 2014]{bch2014}
Belloni, A., Chernozhukov, V., and Hansen, C. (2014).
\newblock Inference on treatment effects after selection among high-dimensional
  controls.
\newblock {\em Review of Economic Studies}, 81:608--650.

\bibitem[Bickel, 1978]{b1978}
Bickel, P.~J. (1978).
\newblock Using residuals robustly i: Tests for heteroscedasticity,
  nonlinearity.
\newblock {\em Annals of Statistics}, 6:266--291.

\bibitem[Bickel et~al., 2009]{brt2009}
Bickel, P.~J., Ritov, Y., and Tsybakov, A.~B. (2009).
\newblock Simultaneous analysis of {L}asso and {D}antzig selector.
\newblock {\em Annals of Statistics}, 37:1705--1732.

\bibitem[Breusch and Pagan, 1979]{bp1979}
Breusch, T.~S. and Pagan, A.~R. (1979).
\newblock A simple test for heteroscedasticity and random coefficient
  variation.
\newblock {\em Econometrica}, 47:1287--1294.

\bibitem[B\"{u}hlmann and van~de Geer, 2011]{bv2011}
B\"{u}hlmann, P. and van~de Geer, S. (2011).
\newblock {\em Statistics for high-dimensional data: Methods, theory, and
  applications}.
\newblock Springer, New York.

\bibitem[Carroll and Ruppert, 1981]{cr1981}
Carroll, R.~J. and Ruppert, D. (1981).
\newblock On robust tests for heteroscedasticity.
\newblock {\em Annals of Statistics}, 9:206--210.

\bibitem[Cook and Weisberg, 1983]{cw1983}
Cook, R.~D. and Weisberg, S. (1983).
\newblock Diagnostics for heteroscedasticity in regression.
\newblock {\em Biometrika}, 70:1--10.

\bibitem[Dette and Munk, 1998]{dm1998}
Dette, H. and Munk, A. (1998).
\newblock Testing heteroscedasticity in nonparametric regression.
\newblock {\em Journal of the Royal Statistical Society: Series B (Statistical
  Methodology)}, 60:693--708.

\bibitem[Diblasi and Bowman, 1997]{db1997}
Diblasi, A. and Bowman, A. (1997).
\newblock Testing for constant variance in a linear model.
\newblock {\em Statistics \& Probability Letters}, 33:95--103.

\bibitem[Friedman et~al., 2010]{fht2010}
Friedman, J., Hastie, T., and Tibshirani, R. (2010).
\newblock Regularization paths for generalized linear models via coordinate
  descent.
\newblock {\em Journal of Statistical Software}, 33:1--22.

\bibitem[Gilley and Pace, 1996]{gp1996}
Gilley, O.~W. and Pace, R.~K. (1996).
\newblock On the {H}arrison and {R}ubinfeld data.
\newblock {\em Journal of Environmental Economics and Management}, 31:403--405.

\bibitem[Glejser, 1969]{g1969}
Glejser, H. (1969).
\newblock A new test for heteroskedasticity.
\newblock {\em Journal of the American Statistical Association}, 64:316--323.

\bibitem[Godfrey, 1978]{g1978}
Godfrey, L.~G. (1978).
\newblock Testing for multiplicative heteroskedasticity.
\newblock {\em Journal of Econometrics}, 8:227--236.

\bibitem[Goldfeld and Quandt, 1965]{gq1965}
Goldfeld, S.~M. and Quandt, R.~E. (1965).
\newblock Some tests for homoscedasticity.
\newblock {\em Journal of the American statistical Association}, 60:539--547.

\bibitem[Harrison and Rubinfeld, 1978]{hr1978}
Harrison, D. and Rubinfeld, D.~L. (1978).
\newblock Hedonic housing prices and the demand for clean air.
\newblock {\em Journal of Environmental Economics and Management}, 5:81--102.

\bibitem[Harrison and McCabe, 1979]{hm1979}
Harrison, M.~J. and McCabe, B.~P. (1979).
\newblock A test for heteroscedasticity based on ordinary least squares
  residuals.
\newblock {\em Journal of the American Statistical Association}, 74:494--499.

\bibitem[Harvey, 1976]{h1976}
Harvey, A.~C. (1976).
\newblock Estimating regression models with multiplicative heteroscedasticity.
\newblock {\em Econometrica}, 44:461--465.

\bibitem[Javanmard and Montanari, 2014]{jm2014}
Javanmard, A. and Montanari, A. (2014).
\newblock Confidence intervals and hypothesis testing for high-dimensional
  regression.
\newblock {\em Journal of Machine Learning Research}, 15:2869--2909.

\bibitem[John, 1971]{j1971}
John, S. (1971).
\newblock Some optimal multivariate tests.
\newblock {\em Biometrika}, 58:123--127.

\bibitem[Koenker, 1981]{k1981}
Koenker, R. (1981).
\newblock A note on studentizing a test for heteroscedasticity.
\newblock {\em Journal of Econometrics}, 17:107--112.

\bibitem[Koenker and Bassett, 1982]{kb1982}
Koenker, R. and Bassett, G. (1982).
\newblock Robust tests for heteroscedasticity based on regression quantiles.
\newblock {\em Econometrica}, 50:43--61.

\bibitem[Li and Yao, 2019]{ly2019}
Li, Z. and Yao, J. (2019).
\newblock Testing for heteroscedasticity in high-dimensional regressions.
\newblock {\em Econometrics and Statistics}, 9:122--139.

\bibitem[Newey and Powell, 1987]{np1987}
Newey, W.~K. and Powell, J.~L. (1987).
\newblock Asymmetric least squares estimation and testing.
\newblock {\em Econometrica}, 55:819--847.

\bibitem[Ramsey, 1969]{r1969}
Ramsey, J.~B. (1969).
\newblock Tests for specification errors in classical linear least-squares
  regression analysis.
\newblock {\em Journal of the Royal Statistical Society: Series B
  (Methodological)}, 31:350--371.

\bibitem[Raskutti et~al., 2010]{rwy2010}
Raskutti, G., Wainwright, M.~J., and Yu, B. (2010).
\newblock Restricted eigenvalue properties for correlated gaussian designs.
\newblock {\em Journal of Machine Learning Research}, 11:2241--2259.

\bibitem[Szroeter, 1978]{s1978}
Szroeter, J. (1978).
\newblock A class of parametric tests for heteroscedasticity in linear
  econometric models.
\newblock {\em Econometrica}, 46:1311--1327.

\bibitem[Tibshirani, 1996]{t1996}
Tibshirani, R. (1996).
\newblock Regression shrinkage and selection via the lasso.
\newblock {\em Journal of the Royal Statistical Society: Series B
  (Methodological)}, 58:267--288.

\bibitem[Tibshirani, 2013]{t2013}
Tibshirani, R.~J. (2013).
\newblock The lasso problem and uniqueness.
\newblock {\em Electronic Journal of statistics}, 7:1456--1490.

\bibitem[van~de Geer et~al., 2014]{vbrd2014}
van~de Geer, S., B{\"u}hlmann, P., Ritov, Y., and Dezeure, R. (2014).
\newblock On asymptotically optimal confidence regions and tests for
  high-dimensional models.
\newblock {\em Annals of Statistics}, 42:1166--1202.

\bibitem[Wang et~al., 2020]{whx2020}
Wang, J., He, X., and Xu, G. (2020).
\newblock Debiased inference on treatment effect in a high-dimensional model.
\newblock {\em Journal of the American Statistical Association}, 115:442--454.

\bibitem[White, 1980]{w1980}
White, H. (1980).
\newblock A heteroskedasticity-consistent covariance matrix estimator and a
  direct test for heteroskedasticity.
\newblock {\em Econometrica}, 48:817--838.

\bibitem[Zhang and Zhang, 2014]{zz2014}
Zhang, C.-H. and Zhang, S.~S. (2014).
\newblock Confidence intervals for low dimensional parameters in high
  dimensional linear models.
\newblock {\em Journal of the Royal Statistical Society: Series B (Statistical
  Methodology)}, 76:217--242.

\end{thebibliography}

\end{document}